\documentclass[12pt]{article}
\oddsidemargin 0 mm
\topmargin -10 mm
\headheight 0 mm
\headsep 0 mm 
\textheight 246.2 mm
\textwidth 159.2 mm
\footskip 9 mm
\setlength{\parindent}{0pt}
\setlength{\parskip}{5pt plus 2pt minus 1pt}
\pagestyle{plain}
\usepackage{amssymb}
\usepackage{amsthm}
\usepackage{amsmath}
\usepackage{graphicx}
\usepackage{enumerate}

\DeclareMathOperator{\Con}{Con}

\newtheorem{theorem}{Theorem}[section]
\newtheorem{definition}[theorem]{Definition}
\newtheorem{lemma}[theorem]{Lemma}

\newtheorem{example}[theorem]{Example}

\title{Algebras describing pseudocomplemented, relatively pseudocomplemented and sectionally pseudocomplemented posets}
\author{Ivan~Chajda and Helmut~L\"anger}
\date{}
\begin{document}

\footnotetext{Support of the research by the Austrian Science Fund (FWF), project I~4579-N, and the Czech Science Foundation (GA\v CR), project 20-09869L, entitled ``The many facets of orthomodularity'', as well as by \"OAD, project CZ~02/2019, entitled ``Function algebras and ordered structures related to logic and data fusion'', and, concerning the first author, by IGA, project P\v rF~2021~030, is gratefully acknowledged.}

\maketitle

\begin{abstract}
In order to be able to use methods of Universal Algebra for investigating posets, we assign to every pseudocomplemented poset, to every relatively pseudocomplemented poset and to every sectionally pseudocomplemented poset a certain algebra (based on a commutative directoid or on a $\lambda$-lattice) which satisfies certain identities and implications. We show that the assigned algebras fully characterize the given corresponding posets. It turns out that the assigned algebras satisfy strong congruence properties which can be transferred back to the posets. We also mention applications of such posets in certain non-classical logics.
\end{abstract}

{\bf AMS Subject Classification:} 06A11, 06D15, 08A62, 08B05

{\bf Keywords:} Pseudocomplemented poset, relatively pseudocomplemented poset, sectionally pseudocomplemented poset, Stone poset, commutative directoid, $\lambda$-lattice, congruence permutability, congruence distributivity, weak regularity.

\section{Introduction}

For investigating algebras researchers usually apply well-known algebraic methods and results. Unfortunately, this does not work in the case of partially ordered sets (posets, shortly). The reason for this is that posets need not have operations and hence basic theorems of General Algebra cannot be applied. In 1990, J.~Je\v zek and R.~Quackenbush (\cite{JQ}) showed that if a poset $\mathbf P$ is up-directed or down-directed then a certain algebra with one binary operation, a so-called {\em directoid}, can be assigned to $\mathbf P$. This assignment is in general not unique, but, conversely, from every such assigned directoid, $\mathbf P$ can be reconstructed in a unique way. This fact allows to convert directed posets into algebras which bear all the information on the given poset.

In a similar way to a given poset that is both up- and down-directed one can assign an algebra with two binary operations $\sqcup$ and $\sqcap$ as shown by V.~Sn\'a\v sel (\cite S). Such an algebra is called a {\em $\lambda$-lattice}. An overview concerning results on directoids and $\lambda$-lattices can be found in our monograph \cite{CL11}.

The aforementioned approach was used also for bounded complemented posets by the authors and M.~Kola\v r\'ik in \cite{CKL}. In fact, the class of directoids assigned to such posets forms a variety of algebras. This is of great advantage since there exist many methods and results for studying varieties in General Algebra. The same machinery was used in \cite{CK} where so-called orthoposets and orthomodular posets (used in the formalization of the logic of quantum mechanics) were converted into algebras forming a variety.

The natural question arises if such a method can be applied also to pseudocomplemented, relatively pseudocomplemented and sectionally pseudocomplemented posets (the last were introduced recently by the authors and J.~Paseka in \cite{CLP21}). We solve this question by using commutative directoids and $\lambda$-lattices. We characterize the assigned algebras by means of relatively simple conditions. Unfortunately, not all of these conditions can be expressed in the form of identities or quasi-identities. Thus the corresponding classes of algebras do not form varieties or quasivarieties. On the other hand, these algebras still share nice congruence properties as we will show.

We believe that our approach can bring new insight into the study of pseudocomplemented, relatively pseudocomplemented and sectionally pseudocomplemented posets since we provide a purely algebraic description of them, thus enabling the application of algebraic tools for their investigation.

\section{Preliminaries}

Let $\mathbf P:=(P,\leq)$ be a poset, $A,B\subseteq P$ and $a,b\in P$. Then $A\leq B$ means that $x\leq y$ for all $x\in A$ and $y\in B$. Instead of $A\leq\{b\}$ and $\{a\}\leq B$ we simply write $A\leq b$ and $a\leq B$. The sets
\begin{align*}
L(A) & :=\{x\in P\mid x\leq A\}, \\
U(A) & :=\{x\in P\mid A\leq x\}
\end{align*}
are called the {\em lower} and {\em upper cone} of $A$, respectively. Instead of $L(\{a\})$ and $L(\{a,b\})$ we simply write $L(a)$ and $L(a,b)$ , respectively. In a similar way we proceed for $U$. It is easy to see that $A\subseteq L(b)$ if and only if $A\leq b$. If the infimum $\inf(a,b)$ of $a$ and $b$ exists in $\mathbf P$ then we will denote it by $a\wedge b$. The poset $\mathbf P$ is called
\begin{itemize}
\item {\em down-directed} if $L(x,y)\neq\emptyset$ for all $x,y\in P$,
\item {\em up-directed} if $U(x,y)\neq\emptyset$ for all $x,y\in P$,
\item {\em directed} if it is both down- and up-directed.
\end{itemize}
Of course, if $\mathbf P$ has a top element $1$ then it is up-direceted, and if it has a bottom element $0$ then it is down-directed.

The concept of a commutative directoid was introduced by J.~Je\v zek and R.~Quackenbush (\cite{JQ}), see also \cite{CL11} for details and elementary theory. A {\em commutative meet-directoid} is a groupoid $(D,\sqcap)$ satisfying the following identities:
\begin{itemize}
\item $x\sqcap x\approx x$,
\item $x\sqcap\big((x\sqcap y)\sqcap z\big)\approx(x\sqcap y)\sqcap z$,
\item $x\sqcap y\approx y\sqcap x$.
\end{itemize}
If $\mathbf D=(D,\sqcap)$ is a commutative meet-directoid and one defines a binary relation $\leq$ on $D$ by $x\leq y$ if $x\sqcap y=x$ then $\mathbb P(\mathbf D):=(D,\leq)$ is a down-directed poset, called the {\em poset induced} by $\mathbf D$.

Dually, one can define a {\em commutative join-directoid} $(D,\sqcup)$. If $\mathbf D=(D,\sqcup)$ is a commutative join-directoid and one defines a binary relation $\leq$ on $D$ by $x\leq y$ if $x\sqcup y=y$ then $\mathbb Q(\mathbf D):=(D,\leq)$ is an up-directed poset, called the {\em poset induced} by $\mathbf D$.

Assume $\mathbf P=(P,\leq)$ to be down-directed and define a binary operation $\sqcap$ on $P$ by the following prescription: $x\sqcap y:=\min(x,y)$ if $x$ and $y$ are comparable with each other, and $x\sqcap y=y\sqcap x$ should be an arbitrary element of $L(x,y)$ otherwise. Thus $x\sqcap y\in L(x,y)$ in any case. Then $\mathbf D:=(P,\sqcap)$ is a commutative meet-directoid, called a {\em commutative meet-directoid assigned} to $\mathbf P$, and $\mathbb P(\mathbf D)=\mathbf P$. Hence, the down-directed poset $\mathbf P$ is uniquely determined by an assigned commutative meet-directoid $\mathbf D$ in contrast to the fact that $\mathbf D$ is in general not uniquely determined by $\mathbf P$.

Dually, we define a {\em commutative join-directoid $\mathbf D=(P,\sqcup)$ assigned} to an up-directed poset $\mathbf P=(P,\leq)$. We then have $\mathbb Q(\mathbf D)=\mathbf P$.

Using an assigned commutative meet- and join-directoid, respectively, we can describe lower and upper cones of a given poset as follows.

\begin{lemma}\label{lem1}
Let $\mathbf P=(P,\leq)$ be a poset and $a,b,c\in P$. If $\mathbf P$ is down-directed and $(P,\sqcap)$ an assigned commutative meet-directoid then
\begin{enumerate}[{\rm(a)}]
\item $L(a,b)=\{(a\sqcap x)\sqcap(b\sqcap x)\mid x\in P\}$,
\item $c\in L(a,b)$ if and only if $(a\sqcap c)\sqcap(b\sqcap c)=c$.
\end{enumerate}
If $\mathbf P$ is up-directed and $(P,\sqcup)$ an assigned commutative join-directoid then
\begin{enumerate}
\item[{\rm(c)}] $U(a,b)=\{(a\sqcup x)\sqcup(b\sqcup x)\mid x\in P\}$,
\item[{\rm(d)}] $c\in U(a,b)$ if and only if $(a\sqcup c)\sqcup(b\sqcup c)=c$.
\end{enumerate}
\end{lemma}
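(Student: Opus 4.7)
The plan is to prove (a) and (b) directly from the defining properties of an assigned commutative meet-directoid, then derive (c) and (d) by duality. The two facts I would use repeatedly are: (i) $x \sqcap y \in L(x,y)$ for all $x,y \in P$ (so in particular $x \sqcap y \le x$ and $x \sqcap y \le y$), and (ii) if $x \le y$ then $x$ and $y$ are comparable, hence $x \sqcap y = \min(x,y) = x$.

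For the inclusion $\supseteq$ in (a), I would take an arbitrary $x \in P$ and observe that $a \sqcap x \le a$ and $b \sqcap x \le b$ by (i); then $(a \sqcap x) \sqcap (b \sqcap x)$ lies below both $a \sqcap x$ and $b \sqcap x$, hence below both $a$ and $b$, so it is in $L(a,b)$. For the reverse inclusion, I would specialize $x := c$ for any $c \in L(a,b)$: then $c \le a$ and $c \le b$, so by (ii) $a \sqcap c = c$ and $b \sqcap c = c$, whence $(a \sqcap c) \sqcap (b \sqcap c) = c \sqcap c = c$ (using idempotency). This same chain of equalities gives the forward direction of (b), and the backward direction of (b) is immediate from (a) since $c = (a \sqcap c) \sqcap (b \sqcap c)$ already exhibits $c$ as an element of the set on the right of (a).

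Parts (c) and (d) are the order-theoretic duals of (a) and (b): replacing $\sqcap$ by $\sqcup$, $L$ by $U$, $\le$ by $\ge$, and ``down-directed'' by ``up-directed'' throughout yields a verbatim dual argument, using that $x \sqcup y \in U(x,y)$ and that $x \le y$ implies $x \sqcup y = y$.

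No serious obstacle is anticipated; the argument is essentially a short manipulation in the directoid using idempotency, commutativity, and the two basic properties of the assignment noted at the outset. The only mild subtlety is remembering that comparability is the hypothesis licensing $x \sqcap y = \min(x,y)$, which is why the choice $x := c$ (rather than some element of $L(a,b)$ produced non-constructively) is the natural witness in the $\subseteq$ direction of (a).
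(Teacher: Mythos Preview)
Your proposal is correct and follows essentially the same approach as the paper: both prove (a) by showing $(a\sqcap x)\sqcap(b\sqcap x)\leq a,b$ for the inclusion $\supseteq$ and by taking $x:=c\in L(a,b)$ with $a\sqcap c=b\sqcap c=c$ for $\subseteq$, derive (b) from the same computation, and handle (c), (d) by duality. The only cosmetic difference is the order in which the two inclusions are treated.
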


\begin{proof}
First assume $\mathbf P$ to be down-directed and $(P,\sqcap)$ to be an assigned commutative meet-directoid.
\begin{enumerate}[(a)]
\item If $c\in L(a,b)$ then $c=c\sqcap c=(a\sqcap c)\sqcap(b\sqcap c)$. Conversely, $(a\sqcap c)\sqcap(b\sqcap c)\leq a\sqcap c\leq a$ and, analogously, $(a\sqcap c)\sqcap(b\sqcap c)\leq b$, i.e.\ $(a\sqcap c)\sqcap(b\sqcap c)\in L(a,b)$.
\item If $c\in L(a,b)$ then $(a\sqcap c)\sqcap(b\sqcap c)=c\sqcap c=c$. The converse direction follows from (i).
\end{enumerate}
The rest of the lemma follows by duality.
\end{proof}

\section{Pseudocomplemented posets}

Pseudocomplemented posets were introduced and studied by O.~Frink (\cite F), see also \cite N and \cite V for further development. Let us recall the definition.

\begin{definition}\label{def3}
A {\em pseudocomplemented poset} is an ordered quadruple $(P,\leq,{}^*,0)$ such that $(P,\leq,0)$ is a poset with bottom element $0$ and $^*$ is a unary operation on $P$ such that for all $x\in P$, $x^*$ is the greatest element $y$ of $P$ satisfying $L(x,y)=\{0\}$. The element $x^*$ is called the {\em pseudocomplement} of $x$. A {\em Stone poset} is a pseudocomplemented poset $(P,\leq,{}^*,0)$ satisfying $U(x^*,x^{**})=\{0^*\}$ for all $x\in P$.
\end{definition}

It is worth noticing that $L(x,y)=\{0\}$ means the same as $x\wedge y=0$. Hence, for every $x\in P$ we have $x\wedge x^*=0$. Moreover, every pseudocomplemented poset $(P,\leq,{}^*,0)$ has a top element, namely $0^*=1$.

\begin{example}\label{ex1}
If $(P,\leq)$ denotes the poset visualized in Figure~1:

\vspace*{-2mm}

\begin{center}
\setlength{\unitlength}{7mm}
\begin{picture}(6,10)
\put(1,3){\circle*{.3}}
\put(5,3){\circle*{.3}}
\put(1,7){\circle*{.3}}
\put(5,7){\circle*{.3}}
\put(3,9){\circle*{.3}}
\put(3,1){\circle*{.3}}
\put(1,3){\line(0,1)4}
\put(1,3){\line(1,1)4}
\put(5,3){\line(-1,1)4}
\put(5,3){\line(0,1)4}
\put(3,9){\line(-1,-1)2}
\put(3,9){\line(1,-1)2}
\put(3,1){\line(-1,1)2}
\put(3,1){\line(1,1)2}
\put(.3,2.85){$a$}
\put(5.4,2.85){$b$}
\put(.3,6.85){$c$}
\put(5.4,6.85){$d$}
\put(2.85,9.4){$1$}
\put(2.85,.25){$0$}
\put(2.2,-.75){{\rm Fig.\ 1}}
\end{picture}
\end{center}

\vspace*{4mm}

and the unary operation $^*$ on $P$ is defined by
\[
\begin{array}{c|cccccc}
 x  & 0 & a & b & c & d & 1 \\
\hline
x^* & 1 & b & a & 0 & 0 & 0
\end{array}
\]
then $(P,\leq,{}^*,0)$ is a pseudocomplemented poset which is neither a lattice nor a Stone poset since
\[
\begin{array}{c|cccccc}
   x   & 0 & a & b & c & d & 1 \\
\hline
  x^*  & 1 & b & a & 0 & 0 & 0 \\
x^{**} & 0 & a & b & 1 & 1 & 1
\end{array}
\]
and $U(a^*,a^{**})=\{c,d,1\}\neq\{1\}$.
\end{example}

\begin{example}
If $(P,\leq)$ denotes the poset visualized in Figure~2:

\vspace*{-2mm}

\begin{center}
\setlength{\unitlength}{7mm}
\begin{picture}(14,12)
\put(5,3){\circle*{.3}}
\put(9,3){\circle*{.3}}
\put(5,7){\circle*{.3}}
\put(9,7){\circle*{.3}}
\put(7,9){\circle*{.3}}
\put(7,1){\circle*{.3}}
\put(1,7){\circle*{.3}}
\put(13,7){\circle*{.3}}
\put(5,3){\line(0,1)4}
\put(5,3){\line(1,1)4}
\put(9,3){\line(-1,1)4}
\put(9,3){\line(0,1)4}
\put(7,9){\line(-1,-1)2}
\put(7,9){\line(1,-1)2}
\put(7,1){\line(-1,1)6}
\put(7,1){\line(1,1)6}
\put(7,9){\line(-3,-1)6}
\put(7,9){\line(3,-1)6}
\put(6.85,.25){$0$}
\put(4.3,2.85){$a$}
\put(9.4,2.85){$b$}
\put(4.3,6.85){$d$}
\put(9.4,6.85){$e$}
\put(6.85,9.4){$1$}
\put(.3,6.85){$c$}
\put(13.4,6.85){$f$}
\put(6.2,-.75){{\rm Fig.\ 2}}
\end{picture}
\end{center}

\vspace*{4mm}

and the unary operation $^*$ on $P$ is defined by
\[
\begin{array}{c|cccccccc}
 x  & 0 & a & b & c & d & e & f & 1 \\
\hline
x^* & 1 & f & c & f & 0 & 0 & c & 0
\end{array}
\]
then $(P,\leq,{}^*,0)$ is not a lattice, but a Stone poset since
\[
\begin{array}{c|cccccccc}
   x   & 0 & a & b & c & d & e & f & 1 \\
\hline
  x^*  & 1 & f & c & f & 0 & 0 & c & 0 \\
x^{**} & 0 & c & f & c & 1 & 1 & f & 1
\end{array}
\]
and $U(x^*,x^{**})=\{1\}$ for all $x\in P$.
\end{example}

Now we show how a pseudocomplemented poset can be characterized by an assigned commutative meet-directoid equipped with a unary operation $^*$ and a nullary operation $0$.

\begin{theorem}\label{th1}
Let $(P,\leq)$ be a down-directed poset, $^*$ a unary operation on $P$, $0\in P$ and $(P,\sqcap)$ a commutative meet-directoid assigned to $(P,\leq)$. Then $\mathbf P=(P,\leq,{}^*,0)$ is a pseudocomplemented poset if and only if $\mathbf A=(P,\sqcap,{}^*,0)$ satisfies the following conditions:
\begin{enumerate}[{\rm(i)}]
\item $0\sqcap x\approx0$,
\item $(x\sqcap y)\sqcap(x^*\sqcap y)\approx0$,
\item $(x\sqcap z)\sqcap(y\sqcap z)=0\,\forall z\in P\Rightarrow y\sqcap x^*=y$.
\end{enumerate}
In this case we call $\mathbf A$ an {\em algebra assigned} to $\mathbf P$.
\end{theorem}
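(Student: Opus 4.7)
The plan is to use Lemma~\ref{lem1}(a) as the bridge between the order-theoretic notion of the lower cone $L(x,y)$ and the algebraic expressions built from $\sqcap$. Once the translation $L(x,y)=\{0\}\iff(x\sqcap z)\sqcap(y\sqcap z)=0$ for all $z\in P$ is available, both directions of the theorem reduce to direct verifications against the definition of pseudocomplementation.

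For the forward direction, assume $\mathbf P=(P,\leq,{}^*,0)$ is pseudocomplemented. Condition~(i) is immediate: $0\sqcap x\in L(0,x)\subseteq L(0)=\{0\}$. For~(ii), note that by Lemma~\ref{lem1}(a), the element $(x\sqcap y)\sqcap(x^*\sqcap y)$ lies in $L(x,x^*)$; but $L(x,x^*)=\{0\}$ because $x^*$ is the pseudocomplement of $x$, so the element must equal $0$. For~(iii), suppose $(x\sqcap z)\sqcap(y\sqcap z)=0$ holds for every $z\in P$; by Lemma~\ref{lem1}(a) every element of $L(x,y)$ has this form, hence $L(x,y)=\{0\}$, and the universal property of $x^*$ yields $y\leq x^*$, i.e.\ $y\sqcap x^*=y$.

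For the converse direction, assume $\mathbf A=(P,\sqcap,{}^*,0)$ satisfies (i)--(iii). First, (i) says $0\sqcap x=0$, which means $0\leq x$ for every $x\in P$, so $0$ is a bottom element of $\mathbf P$. Next, fix $x\in P$: by (ii), every element of the set $\{(x\sqcap z)\sqcap(x^*\sqcap z)\mid z\in P\}$ equals $0$, and by Lemma~\ref{lem1}(a) this set is precisely $L(x,x^*)$, so $L(x,x^*)=\{0\}$. Finally, suppose $y\in P$ satisfies $L(x,y)=\{0\}$; then Lemma~\ref{lem1}(a) forces $(x\sqcap z)\sqcap(y\sqcap z)=0$ for every $z\in P$, and (iii) gives $y\sqcap x^*=y$, i.e.\ $y\leq x^*$. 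Hence $x^*$ is the greatest element $y$ with $L(x,y)=\{0\}$, confirming that $\mathbf P$ is pseudocomplemented.

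The step requiring the most care is clause~(iii): it is stated not as a pure identity or quasi-identity but with a universal quantifier ranging over all of $P$, and this is exactly what is needed so that the premise matches, via Lemma~\ref{lem1}(a), the order-theoretic condition $L(x,y)=\{0\}$. Recognizing that the infinitary premise of~(iii) is the algebraic reflection of ``$x\wedge y=0$'' is the conceptual heart of the argument; beyond this, the verifications are routine applications of Lemma~\ref{lem1} together with the fact that the assigned meet-directoid always satisfies $x\sqcap y\in L(x,y)$.
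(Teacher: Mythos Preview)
Your proof is correct and follows essentially the same approach as the paper: both rely on Lemma~\ref{lem1}(a) to translate the order-theoretic conditions $0\leq x$, $L(x,x^*)=\{0\}$, and $L(x,y)=\{0\}\Rightarrow y\leq x^*$ into the algebraic conditions (i)--(iii). The paper's version is simply terser, stating the three equivalences (i)$\Leftrightarrow$(i'), (ii)$\Leftrightarrow$(ii'), (iii)$\Leftrightarrow$(iii') without unpacking them, whereas you spell out both directions explicitly.
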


\begin{proof}
$\mathbf P$ is a pseudocomplemented poset if and only if the following hold:
\begin{enumerate}[(i')]
\item $0\leq x$,
\item $L(x,x^*)=\{0\}$,
\item $L(x,y)=\{0\}\Rightarrow y\leq x^*$.
\end{enumerate}
Now $\mathbf P$ satisfies (i') if and only if $\mathbf A$ satisfies (i). Because of Lemma~\ref{lem1} (a), $\mathbf P$ satisfies (ii') if and only if $\mathbf A$ satisfies (ii), and $\mathbf P$ satisfies (iii') if and only if $\mathbf A$ satisfies (iii).
\end{proof}

The concept of a $\lambda$-lattice was introduced by V.~Sn\'a\v sel (\cite S), see also \cite{CL11}. A {\em $\lambda$-lattice} is an algebra $(L,\sqcup,\sqcap)$ of type $(2,2)$ satisfying the following identities:
\begin{itemize}
\item $x\sqcup y\approx y\sqcup x$, $x\sqcap y\approx y\sqcap x$,
\item $x\sqcup\big((x\sqcup y)\sqcup z\big)\approx(x\sqcup y)\sqcup z$, $x\sqcap\big((x\sqcap y)\sqcap z\big)\approx(x\sqcap y)\sqcap z$,
\item $(x\sqcup y)\sqcap x\approx x$, $(x\sqcap y)\sqcup x\approx x$.
\end{itemize}
Let $\mathbf L=(L,\sqcup,\sqcap)$ be a $\lambda$-lattice. Then $(L,\sqcup)$ and $(L,\sqcap)$ are commutative join- and meet-directoids, respectively, and $x\sqcap y=x$ if and only if $x\sqcup y=y$. If one defines a binary relation $\leq$ on $L$ by $x\leq y$ if $x\sqcap   y=x$ (or, equivalently, $x\sqcup y=y$) then $\mathbb R(\mathbf L):=(L,\leq)$ is a directed poset, called the {\em poset induced} by $\mathbf L$.

Let $\mathbf P=(P,\leq)$ be a directed poset and define binary operations $\sqcup$ and $\sqcap$ on $P$ by the following prescription: $x\sqcup y:=\max(x,y)$ and $x\sqcap y:=\min(x,y)$ if $x$ and $y$ are comparable with each other, and $x\sqcup y=y\sqcup x$ and $x\sqcap y=y\sqcap x$ should be arbitrary elements of $U(x,y)$ and $L(x,y)$, respectively, otherwise. Then $\mathbf L:=(P,\sqcup,\sqcap)$ is a $\lambda$-lattice, called a {\em $\lambda$-lattice assigned} to $\mathbf P$, and $\mathbb R(\mathbf L)=\mathbf P$. Hence, a given directed poset $\mathbf P$ is uniquely determined by an assigned $\lambda$-lattice $\mathbf L$ contrary to the fact that $\mathbf L$ may be assigned to $\mathbf P$ in a non-unique way. It is easy to see that $(P,\sqcup)$ and $(P,\sqcap)$ are commutative join- and meet-directoids assigned to $\mathbf P$, respectively.

As remarked above, every Stone poset $\mathbf P$ has the top element $0^*$, thus it is also up-directed and hence directed. Due to this, we can assign to $\mathbf P$ a $\lambda$-lattice.

\begin{theorem}
Let $(P,\leq)$ be a directed poset, $^*$ a unary operation on $P$, $0\in P$ and $(P,\sqcup,\sqcap)$ a $\lambda$-lattice assigned to $(P,\leq)$. Then $\mathbf P=(P,\leq,{}^*,0)$ is a Stone poset if and only if $\mathbf A=(P,\sqcup,\sqcap,{}^*,0)$ satisfies {\rm(i)} -- {\rm(iii)} of Theorem~\ref{th1} as well as
\begin{enumerate}
\item[{\rm(iv)}] $(x^*\sqcup y)\sqcup(x^{**}\sqcup y)\approx0^*$.
\end{enumerate}
In this case we call $\mathbf A$ an {\em algebra assigned} to $\mathbf P$.
\end{theorem}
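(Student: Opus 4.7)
The plan is to reduce the statement to two ingredients already in hand: Theorem~\ref{th1}, which handles the pseudocomplemented part of the structure via the meet-directoid reduct $(P,\sqcap)$, and Lemma~\ref{lem1}(c), which encodes upper cones in terms of the join-directoid reduct $(P,\sqcup)$. The key preliminary observation is that the $\lambda$-lattice $(P,\sqcup,\sqcap)$ assigned to $(P,\leq)$ decomposes into a commutative meet-directoid and a commutative join-directoid assigned to $(P,\leq)$ (as noted in the paragraph just before the theorem), so both Theorem~\ref{th1} and Lemma~\ref{lem1}(c) apply verbatim to the corresponding reducts of $\mathbf A$. Also, a Stone poset, being by definition pseudocomplemented, has bottom $0$ and top $0^*$, hence is automatically directed, so assuming $(P,\leq)$ directed is consistent with $\mathbf P$ being Stone.

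For the forward direction, if $\mathbf P$ is a Stone poset then it is in particular pseudocomplemented, so Theorem~\ref{th1} applied to the meet-directoid reduct gives conditions (i)--(iii). For (iv), fix $x,y\in P$. By Lemma~\ref{lem1}(c) with $a=x^*$ and $b=x^{**}$, the element $(x^*\sqcup y)\sqcup(x^{**}\sqcup y)$ lies in $U(x^*,x^{**})$. Since $\mathbf P$ is Stone, $U(x^*,x^{**})=\{0^*\}$, so this element equals $0^*$; as $x,y$ were arbitrary, this is exactly identity (iv).

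For the backward direction, from (i)--(iii) Theorem~\ref{th1} yields that $\mathbf P$ is pseudocomplemented, whence $0^*$ is well-defined and is the top element. To derive the Stone condition, fix $x\in P$. By Lemma~\ref{lem1}(c),
\[
U(x^*,x^{**})=\{(x^*\sqcup y)\sqcup(x^{**}\sqcup y)\mid y\in P\},
\]
and by (iv) every element of the right-hand side equals $0^*$, so $U(x^*,x^{**})=\{0^*\}$. This is the defining condition of a Stone poset, completing the equivalence.

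The proof is essentially bookkeeping: no genuine obstacle arises beyond checking that the meet- and join-reducts of the assigned $\lambda$-lattice are indeed assigned directoids for $(P,\leq)$, which is already established in the paper, and that identity (iv) is the pointwise rewriting of the set equality $U(x^*,x^{**})=\{0^*\}$ provided by Lemma~\ref{lem1}(c).
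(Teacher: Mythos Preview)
Your proof is correct and follows essentially the same approach as the paper: reduce the pseudocomplemented part to Theorem~\ref{th1} via the meet-directoid reduct, and translate the Stone condition $U(x^*,x^{**})=\{0^*\}$ into identity~(iv) via Lemma~\ref{lem1}(c). The paper's argument is just a terser version of what you wrote.
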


\begin{proof}
$\mathbf P$ is a Stone poset if and only if it is a pseudocomplemented poset satisfying
\begin{enumerate}
\item[{\rm(iv')}] $U(x^*,x^{**})=\{0^*\}$.
\end{enumerate}
Because of Lemma~\ref{lem1} (c), $\mathbf P$ satisfies (iv') if and only if $\mathbf A$ satisfies (iv). The rest follows from Theorem~\ref{th1}.
\end{proof}

Let us recall that a {\em poset} $(P,\leq)$ is called {\em distributive} if it satisfies the equality
\begin{equation}\label{equ3}
U\big(L(x,y),z\big)=UL\big(U(x,z),U(y,z)\big)
\end{equation}
or, equivalently,
\begin{equation*}
L\big(U(x,y),z\big)=LU\big(L(x,z),L(y,z)\big)
\end{equation*}
for all $x,y,z\in P$. We are going to show that distributive pseudocomplemented posets satisfying $U(x,x^*)=\{0^*\}$ can be characterized by means of equalities only.

\begin{theorem}\label{th6}
Let $(P,\leq,{}^*,0,1)$ be a bounded distributive poset with a unary operation $^*$ satisfying $U(x,x^*)=\{1\}$ for all $x\in P$. Then $\mathbf P=(P,\leq,{}^*,0)$ is pseudocomplemented if and only if it satisfies the following equalities for all $x,y\in P$:
\begin{enumerate}[{\rm(i)}]
\item $L(x,x^*)=\{0\}$,
\item $U\big(x^*,L(x,y)\big)=U(x^*,y)$.
\end{enumerate}
\end{theorem}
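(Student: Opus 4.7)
The plan is to unfold the definition of pseudocomplementation in cone form and reduce everything to routine manipulations of lower/upper cones. Recall that $\mathbf P=(P,\le,{}^*,0)$ is pseudocomplemented iff, for every $x\in P$, the element $x^*$ is (a) an element whose meet with $x$ is $0$, i.e.\ $L(x,x^*)=\{0\}$, and (b) the greatest such element, i.e.\ $L(x,y)=\{0\}$ implies $y\le x^*$. I will establish both implications against this characterisation.

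For the forward direction, (i) is just condition (a), so only (ii) needs work. The key observation is that (ii) already follows from the hypotheses on $\mathbf P$ (distributivity plus $U(x,x^*)=\{1\}$), without even invoking the maximality property of $x^*$. Starting from the distributive law
\[
U\bigl(L(x,y),x^*\bigr)=UL\bigl(U(x,x^*),U(y,x^*)\bigr),
\]
I substitute $U(x,x^*)=\{1\}$ and use that $1$ is top, so $L(\{1\},U(y,x^*))=LU(y,x^*)$. Applying the standard identity $ULU=U$ for cones then gives $U(L(x,y),x^*)=U(y,x^*)$, which is exactly~(ii).

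For the reverse direction, I assume (i) and (ii) and verify that $x^*$ is indeed the pseudocomplement of $x$. Condition (i) says $L(x,x^*)=\{0\}$, so $x^*$ satisfies the required meet equation. For maximality, suppose $y\in P$ with $L(x,y)=\{0\}$. Instantiating (ii) yields
\[
U(x^*,y)=U\bigl(x^*,L(x,y)\bigr)=U(x^*,\{0\})=U(x^*),
\]
where the last equality uses that $0$ is the bottom. Since $x^*\in U(x^*)=U(x^*,y)$, we get $y\le x^*$, as required.

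No genuine obstacle is expected: the proof reduces to symbol pushing with cones. The only point deserving care is the computation $UL(\{1\},U(y,x^*))=U(y,x^*)$, which combines the facts that $L(\{1\})=P$ (because $1$ is the top element) and $ULU=U$; both are standard but must be invoked explicitly. Once this equality is in hand, both directions are immediate.
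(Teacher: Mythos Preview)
Your proof is correct and follows essentially the same route as the paper: the forward direction of (ii) is obtained from the distributive identity $U(L(x,y),z)=UL(U(x,z),U(y,z))$ together with $U(x,x^*)=\{1\}$ and $ULU=U$, and the reverse direction substitutes $L(x,y)=\{0\}$ into (ii) to conclude $U(x^*,y)=U(x^*)$, hence $y\le x^*$. Your additional remark that (ii) actually follows from the standing hypotheses alone (without invoking the maximality of $x^*$) is correct and worth noting.
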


\begin{proof}
Let $a,b\in P$. If $\mathbf P$ is pseudocomplemented then (i) follows from Definition~\ref{def3} and (ii) follows from
\[
U\big(a^*,L(a,b)\big)=UL\big(U(a^*,a),U(a^*,b)\big)=UL\big(1,U(a^*,b)\big)=ULU(a^*,b)=U(a^*,b)
\]
by using distributivity of $(P,\leq)$ and $U(a,a^*)=\{1\}$. If, conversely, $\mathbf P$ satisfies (i) and (ii) Then $L(a,a^*)=\{0\}$ by (i) and if $L(a,b)=\{0\}$ then
\[
a^*\in U(a^*)=U(a^*,0)=U\big(a^*,L(a,b)\big)=U(a^*,b)\subseteq U(b)
\]
by (ii) whence $b\leq a^*$.
\end{proof}

An example of a poset satisfying the assumptions of Theorem~\ref{th6} is the following.

\begin{example}
If $(P,\leq)$ denotes the poset visualized in Figure~3:

\vspace*{-2mm}

\begin{center}
\setlength{\unitlength}{7mm}
\begin{picture}(8,12)
\put(4,1){\circle*{.3}}
\put(1,4){\circle*{.3}}
\put(3,4){\circle*{.3}}
\put(5,4){\circle*{.3}}
\put(7,4){\circle*{.3}}
\put(1,6){\circle*{.3}}
\put(7,6){\circle*{.3}}
\put(1,8){\circle*{.3}}
\put(3,8){\circle*{.3}}
\put(5,8){\circle*{.3}}
\put(7,8){\circle*{.3}}
\put(4,11){\circle*{.3}}
\put(4,1){\line(-1,1)3}
\put(4,1){\line(-1,3)1}
\put(4,1){\line(1,3)1}
\put(4,1){\line(1,1)3}
\put(4,11){\line(-1,-1)3}
\put(4,11){\line(-1,-3)1}
\put(4,11){\line(1,-3)1}
\put(4,11){\line(1,-1)3}
\put(1,4){\line(0,1)4}
\put(1,4){\line(1,1)4}
\put(3,4){\line(-1,1)2}
\put(3,4){\line(1,1)4}
\put(5,4){\line(-1,1)4}
\put(5,4){\line(1,1)2}
\put(7,4){\line(-1,1)4}
\put(7,4){\line(0,1)4}
\put(1,6){\line(1,1)2}
\put(7,6){\line(-1,1)2}
\put(3.85,.25){$0$}
\put(.3,3.85){$a$}
\put(2.3,3.85){$b$}
\put(5.4,3.85){$c$}
\put(7.4,3.85){$d$}
\put(.3,5.85){$e$}
\put(7.4,5.85){$f$}
\put(.3,7.85){$g$}
\put(2.3,7.85){$h$}
\put(5.4,7.85){$i$}
\put(7.4,7.85){$j$}
\put(3.85,11.4){$1$}
\put(3.2,-.75){{\rm Fig.\ 3}}
\end{picture}
\end{center}

\vspace*{4mm}

and the unary operation $^*$ on $P$ is defined by
\[
\begin{array}{c|cccccccccccc}
 x  & 0 & a & b & c & d & e & f & g & h & i & j & 1 \\
\hline
x^* & 1 & j & i & h & g & f & e & d & c & b & a & 0
\end{array}
\]
then $(P,\leq,{}^*,0)$ is a distributive Stone poset {\rm(}which is not a lattice{\rm)} satisfying $U(x,x^*)=\{1\}$ for all $x\in P$ since
\[
\begin{array}{c|cccccccccccc}
   x   & 0 & a & b & c & d & e & f & g & h & i & j & 1 \\
\hline
  x^*  & 1 & j & i & h & g & f & e & d & c & b & a & 0 \\
x^{**} & 0 & a & b & c & d & e & f & g & h & i & j & 1
\end{array}
\]
\end{example}

\section{Relatively pseudocomplemented posets}

Relatively pseudocomplemented posets were studied by numerous authors from different points of view, e.g.\ as a base of some non-classical logics where the relative pseudocomplementation is considered as the connective implication (see e.g.\ \cite K) or as an approach to Hilbert algebras (see e.g.\ \cite{CLP20} and \cite R), or for purely algebraic reasons, see e.g.\ \cite{CL18}. Recall the following definition.

\begin{definition}\label{def1}
A {\em relatively pseudocomplemented poset} is an ordered quadruple $(P,\leq,*,1)$ such that $(P,\leq)$ is a poset and $*$ is a binary operation on $P$ such that for all $x,y\in P$, $x*y$ is the greatest element $z$ of $P$ satisfying $L(x,z)\subseteq L(y)$ and $1$ denotes the top element of $(P,\leq)$. {\rm(}The existence of such an element follows from the fact that $x*x$ is the greatest element $y$ of $(P,\leq)$ satisfying $L(x,y)\subseteq L(x)$.{\rm)} The element $x*y$ is called the {\em relative pseudocomplement} of $x$ with respect to $y$.
\end{definition}

It is clear that a relatively pseudocomplemented poset having a bottom element $0$ is pseudocomplemented since $x^*=x*0$. As it was shown by the authors in \cite{CL18}, every relatively pseudocomplemented poset is distributive.

It is well-known that relatively pseudocomplemented semilattices or lattices play important roles in the axiomatization of intuitionistic logics. They are known under the names {\em Brouwerian {\rm(}semi-{\rm)}lattices} or {\em Heyting algebras}, see e.g.\ \cite{JQ}. However, also relatively pseudocomplemented posets can be recognized as a formalization of certain logics of this sort, where conjunction is unsharp in the following sense. Directly by Definition~\ref{def1} we have
\[
L(a,c)\leq b\text{ if and only if }c\leq a*b.
\]
The binary operation $*$ can be considered as the logical connective of implication. From above we have
\[
L(a,a*b)\leq b.
\]
This can be transferred to the language of propositional calculus as follows: If we know truth values of $a$ and $a*b$ then the truth value of $b$ cannot be less than that of $a$ and $a*b$. This expresses an unsharp version of the derivation rule {\em Modus Ponens}.

A typical example of a relatively pseudocomplemented poset is the following.

\begin{example}
If $(P,\leq)$ denotes the poset from Example~\ref{ex1} and the binary operation $*$ on $P$ is defined by
\[
\begin{array}{c|cccccc}
* & 0 & a & b & c & d & 1 \\
\hline
0 & 1 & 1 & 1 & 1 & 1 & 1 \\
a & b & 1 & b & 1 & 1 & 1 \\
b & a & a & 1 & 1 & 1 & 1 \\
c & 0 & a & b & 1 & d & 1 \\
d & 0 & a & b & c & 1 & 1 \\
1 & 0 & a & b & c & d & 1
\end{array}
\]
then $(P,\leq,*,1)$ is a relatively pseudocomplemented poset which is not a lattice.
\end{example}

Also down-directed relatively pseudocomplemented posets can be characterized by means of assigned directoids equipped with a binary operation.

\begin{theorem}\label{th3}
Let $(P,\leq)$ be a down-directed poset, $*$ a binary operation on $P$, $1\in P$ and $(P,\sqcap)$ a commutative meet-directoid   corresponding to $(P,\leq)$. Then $\mathbf P=(P,\leq,*,1)$ is a relatively pseudocomplemented poset if and only if $\mathbf A=(P,\sqcap,*,1)$ satisfies the following conditions:
\begin{enumerate}[{\rm(i)}]
\item $x\sqcap1\approx x$,
\item $\Big((x\sqcap z)\sqcap\big((x*y)\sqcap z\big)\Big)\sqcap y\approx(x\sqcap z)\sqcap\big((x*y)\sqcap z\big)$,
\item $\big((x\sqcap t)\sqcap(z\sqcap t)\big)\sqcap y=(x\sqcap t)\sqcap(z\sqcap t)\,\forall t\in P\Rightarrow z\sqcap(x*y)=z$.
\end{enumerate}
In this case we call $\mathbf A$ an {\em algebra assigned} to $\mathbf P$.
\end{theorem}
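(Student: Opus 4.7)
The plan is to mirror the strategy of Theorem~\ref{th1}: first reduce the definition of a relatively pseudocomplemented poset to three poset-theoretic conditions, then translate each one via Lemma~\ref{lem1} into the algebraic conditions (i)--(iii).

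Specifically, I would begin by observing that $\mathbf P=(P,\leq,*,1)$ is a relatively pseudocomplemented poset if and only if the following hold for all $x,y,z\in P$:
\begin{enumerate}[{\rm(i')}]
\item $1$ is the top element of $(P,\leq)$,
\item $L(x,x*y)\subseteq L(y)$,
\item $L(x,z)\subseteq L(y)\Rightarrow z\leq x*y$.
\end{enumerate}
Indeed, (ii') and (iii') together express that $x*y$ is the greatest element $z$ of $(P,\leq)$ with $L(x,z)\subseteq L(y)$ (as required by Definition~\ref{def1}), while (i') encodes that $1$ is the top element.

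Next I would verify the three equivalences separately. For (i)$\Leftrightarrow$(i') note simply that $x\sqcap 1=x$ for all $x$ is the same as $x\leq 1$ for all $x$, i.e.\ that $1$ is the top. For (ii)$\Leftrightarrow$(ii'), apply Lemma~\ref{lem1}(a) with $a=x$, $b=x*y$ to get
\[
L(x,x*y)=\{(x\sqcap z)\sqcap((x*y)\sqcap z)\mid z\in P\};
\]
thus $L(x,x*y)\subseteq L(y)$ is equivalent to saying every element $(x\sqcap z)\sqcap((x*y)\sqcap z)$ lies in $L(y)$, i.e., is fixed by $\sqcap y$, which is precisely the identity (ii). For (iii)$\Leftrightarrow$(iii'), apply Lemma~\ref{lem1}(a) again, this time to $L(x,z)$: the premise of (iii) states that $(x\sqcap t)\sqcap(z\sqcap t)\leq y$ for every $t\in P$, which by Lemma~\ref{lem1}(a) is equivalent to $L(x,z)\subseteq L(y)$; and the conclusion $z\sqcap(x*y)=z$ is just $z\leq x*y$. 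Hence (iii) is literally a translation of (iii').

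No serious obstacle is anticipated; the only point requiring attention is keeping the outer variable $z$ in (ii) and the outer variables $z,y$ in (iii) separate from the bound variable ($z$ in (ii), $t$ in (iii)) that parametrises the lower cone via Lemma~\ref{lem1}(a). Once the three equivalences are in hand, the theorem follows by substituting them into the reformulation (i')--(iii') of the definition.
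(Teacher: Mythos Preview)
Your proposal is correct and follows essentially the same approach as the paper: reduce Definition~\ref{def1} to the three poset-theoretic conditions (i$'$)--(iii$'$) and then translate each into (i)--(iii) via Lemma~\ref{lem1}(a). The paper writes $L(x,x*y)\leq y$ and $L(x,z)\leq y$ where you write $\subseteq L(y)$, but this is purely notational and the argument is otherwise identical (indeed, you spell out the use of Lemma~\ref{lem1}(a) in slightly more detail than the paper does).
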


\begin{proof}
$\mathbf P$ is a relatively pseudocomplemented poset if and only if the following hold:
\begin{enumerate}
\item[(i')] $x\leq1$,
\item[(ii')] $L(x,x*y)\leq y$,
\item[(iii')] $L(x,z)\leq y\Rightarrow z\leq x*y$.
\end{enumerate}
Obviously, $\mathbf P$ satisfies (i') if and only if $\mathbf A$ satisfies (i). Because of Lemma~\ref{lem1} (a), $\mathbf P$ satisfies (ii') if and only if $\mathbf A$ satisfies (ii), and $\mathbf P$ satisfies (iii') if and only if $\mathbf A$ satisfies (iii).
\end{proof}

As shown above, a relatively pseudocomplemented poset $\mathbf P$ is determined by its assigned algebra $\mathbf A$ which is a commutative meet-directoid with constant $1$ and equipped with a binary oparation $*$. Hence, this algebra shares all the properties of $\mathbf P$, but expressed in the language of $\mathbf A$. In the following we show how some properties of $\mathbf A$ can be derived directly from conditions (i) -- (iii) of Theorem~\ref{th3}.

\begin{theorem}\label{th4}
Let $\mathbf A=(P,\sqcap,*,1)$ be an algebra assigned to a relatively pseudocomplemented poset $(P,\leq,*,1)$. Then it satisfies the following identities:
\begin{enumerate}[{\rm(a)}]
\item $x*x\approx1$,
\item $1*x\approx x$,
\item $x\sqcap\big((x*y)*y\big)\approx x$.
\end{enumerate}
\end{theorem}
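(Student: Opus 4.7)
The plan is to work with the underlying poset $(P,\leq,*,1)$ and translate results back to the algebra via the identity $x\leq y\iff x\sqcap y=x$ (which holds because $(P,\sqcap)$ is assigned to $(P,\leq)$). Since Theorem~\ref{th3} guarantees that $\mathbf A$ is assigned exactly when the poset is relatively pseudocomplemented in the sense of Definition~\ref{def1}, I can just use the characterization of $x*y$ as the greatest $z\in P$ with $L(x,z)\leq y$.

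For (a), I would observe that $L(x,1)=L(x)\subseteq L(x)$, so $1$ itself satisfies the defining property of $x*x$; being the top element of $(P,\leq)$, it must therefore equal the greatest such element, giving $x*x=1$. For (b), I would note that $L(1,z)=L(z)$ since $z\leq 1$, so the condition $L(1,z)\subseteq L(x)$ simplifies to $z\leq x$; hence $1*x$ is the largest $z$ with $z\leq x$, namely $x$ itself. Both translate into the algebra as the desired identities immediately, and identity (a) combined with (i) of Theorem~\ref{th3} also recovers the constant $1$ from $*$.

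For (c), the key observation is that the defining property of $x*y$ gives $L(x,x*y)\leq y$, i.e.\ $L(x*y,x)\subseteq L(y)$. By the definition of $(x*y)*y$ as the greatest element $z$ with $L(x*y,z)\subseteq L(y)$, this means $x$ is one such $z$, so $x\leq(x*y)*y$. Translating back via $x\sqcap y=x\iff x\leq y$, this is exactly $x\sqcap\bigl((x*y)*y\bigr)=x$.

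No serious obstacle is expected: each identity is a one-line consequence of Definition~\ref{def1}, and the only care needed is to consistently pass between the poset language (where the definition of $*$ lives) and the algebraic language via $\sqcap$, using the fact that the assigned directoid $(P,\sqcap)$ induces the original order. One could alternatively derive (a)--(c) directly from (i)--(iii) of Theorem~\ref{th3} by instantiating suitable variables, but the poset-level argument is cleaner and shorter.
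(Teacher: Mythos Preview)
Your argument is correct: each of (a)--(c) follows in one line from Definition~\ref{def1} once you use the equivalence $x\leq y\iff x\sqcap y=x$ coming from the assigned directoid. There is no gap.

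However, the paper deliberately takes the other route you mention in your last sentence. The text preceding Theorem~\ref{th4} announces the intent explicitly: to show that (a)--(c) can be derived \emph{directly from conditions {\rm(i)}--{\rm(iii)} of Theorem~\ref{th3}}, without going back to the poset. Concretely, the paper obtains (a) by applying (iii) with $y:=x$, $z:=1$ (after checking the hypothesis via $(x\sqcap t)\sqcap(1\sqcap t)\leq x$); obtains (b) by combining an instance of (ii) with $x=z=1$ to get $1*x\leq x$ and an instance of (iii) with $x:=1$, $z:=x$ to get $x\leq 1*x$; and obtains (c) by feeding (ii) (with $z:=t$) as the hypothesis of (iii) with $x:=x*y$, $z:=x$. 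Your poset-level proof is undeniably shorter, but the paper's derivation makes a different point: that the algebraic axioms (i)--(iii) are strong enough on their own to recover these identities, which is precisely the ``purely algebraic description'' advertised in the introduction. If you want your write-up to match the paper's intent, you should carry out the substitutions into (i)--(iii) rather than appeal to Definition~\ref{def1}.
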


\begin{proof}
\
\begin{enumerate}[(a)]
\item We have
\[
(x\sqcap t)\sqcap(1\sqcap t)\leq x\sqcap t\leq x\forall t\in P
\]
and hence
\[
\big((x\sqcap t)\sqcap(1\sqcap t)\big)\sqcap x=(x\sqcap t)\sqcap(1\sqcap t)\forall t\in P
\]
which implies
\[
x*x=1\sqcap(x*x)=1
\]
by (i) and by (iii) when replacing $y$ and $z$ by $x$ and $1$, respectively.
\item We have
\[
1*x=1\sqcap(1*x)=(1\sqcap1)\sqcap\big((1*x)\sqcap1\big)=\Big((1\sqcap1)\sqcap\big((1*x)\sqcap1\big)\Big)\sqcap x\leq x
\]
by (i) and by (ii) when replacing $x$, $y$ and $z$ by $1$, $x$ and $1$, respectively. Conversely, we have
\begin{align*}
\big((1\sqcap t)\sqcap(x\sqcap t)\big)\sqcap x & =\big(t\sqcap(x\sqcap t)\big)\sqcap x=(x\sqcap t)\sqcap x=x\sqcap t=t\sqcap(x\sqcap t)= \\
                                               & =(1\sqcap t)\sqcap(x\sqcap t)\forall t\in P
\end{align*}
and hence
\[
x=x\sqcap(1*x)\leq1*x
\]
by (iii) when replacing $x$, $y$ and $z$ by $1$, $x$ and $x$, respectively. Altogether, we obtain $1*x\approx x$.
\item We have
\[
\Big((x\sqcap t)\sqcap\big((x*y)\sqcap t\big)\Big)\sqcap y=(x\sqcap t)\sqcap\big((x*y)\sqcap t\big)\forall t\in P
\]
by (ii) when replacing $z$ by $t$ and hence
\[
\Big(\big((x*y)\sqcap t\big)\sqcap(x\sqcap t)\Big)\sqcap y=\big((x*y)\sqcap t\big)\sqcap(x\sqcap t)\forall t\in P
\]
which implies
\[
x\sqcap\big((x*y)*y\big)=x
\]
by (iii) when replacing $x$ and $z$ by $x*y$ and $x$, respectively,
\end{enumerate}
\end{proof}

\section{Sectionally pseudocomplemented posets}

Sectionally pseudocomplemented posets were recently introduced by the authors and J.~Paseka in \cite{CLP21}.

\begin{definition}\label{def2}
A {\em sectionally pseudocomplemented poset} is an ordered triple $(P,\leq,\circ)$ such that $(P,\leq)$ is a poset and $\circ$ is a binary operation on $P$ such that for all $x,y\in P$, $x\circ y$ is the greatest element $z$ of $P$ satisfying $L(U(x,y),z)=L(y)$. The element $x\circ y$ is called the {\em sectional pseudocomplement} of $x$ with respect to $y$. A {\em sectionally pseudocomplemented poset with $1$} is an ordered quadruple $(P,\leq,\circ,1)$ such that $(P,\leq,\circ)$ is a sectionally pseudocomplemented poset and $1$ is the top element of $(P,\leq)$. A {\em strongly sectionally pseudocomplemented poset} {\rm(}see {\rm\cite{CLP21})} is a sectionally pseudocomplemented poset $(P,\leq,\circ,1)$ with $1$ satisfying $x\leq(x\circ y)\circ y$ for all $x,y\in P$.
\end{definition}

If a sectionally pseudocomplemented poset $(P,\leq,\circ)$ has a top element $1$ then $1=x\circ x$ for all $x\in P$, but not every sectionally pseudocomplemented poset has a top element, see the following example.

\begin{example}
If $(P,\leq)$ denotes the poset visualized in Figure~4:

\vspace*{-2mm}

\begin{center}
\setlength{\unitlength}{7mm}
\begin{picture}(4,6)
\put(0,1){\circle*{.3}}
\put(4,1){\circle*{.3}}
\put(0,5){\circle*{.3}}
\put(4,5){\circle*{.3}}
\put(0,1){\line(0,1)4}
\put(0,1){\line(1,1)4}
\put(4,1){\line(-1,1)4}
\put(4,1){\line(0,1)4}
\put(-.15,.25){$a$}
\put(3.85,.25){$b$}
\put(-.15,5.4){$c$}
\put(3.85,5.4){$d$}
\put(1.2,-.75){{\rm Fig.\ 4}}
\end{picture}
\end{center}

\vspace*{4mm}

and the binary operation $\circ$ on $P$ is defined by $x\circ y:=y$ for all $x,y\in P$ then $(P,\leq,\circ)$ is a sectionally pseudocomplemented poset which is neither a lattice nor relatively pseudocomplemented nor strongly sectionally pseudocomplemented since it has no top element.
\end{example}

\begin{example}
If $\mathbf P=(P,\leq)$ denotes the poset visualized in Figure~5:

\vspace*{-2mm}

\begin{center}
\setlength{\unitlength}{7mm}
\begin{picture}(6,10)
\put(1,3){\circle*{.3}}
\put(2,2){\circle*{.3}}
\put(5,3){\circle*{.3}}
\put(1,7){\circle*{.3}}
\put(5,7){\circle*{.3}}
\put(3,9){\circle*{.3}}
\put(3,1){\circle*{.3}}
\put(1,3){\line(0,1)4}
\put(1,3){\line(1,1)4}
\put(5,3){\line(-1,1)4}
\put(5,3){\line(0,1)4}
\put(3,9){\line(-1,-1)2}
\put(3,9){\line(1,-1)2}
\put(3,1){\line(-1,1)2}
\put(3,1){\line(1,1)2}
\put(.3,2.85){$b$}
\put(1.3,1.85){$a$}
\put(5.4,2.85){$c$}
\put(.3,6.85){$d$}
\put(5.4,6.85){$e$}
\put(2.85,9.4){$1$}
\put(2.85,.25){$0$}
\put(2.2,-.75){{\rm Fig.\ 5}}
\end{picture}
\end{center}

\vspace*{4mm}

and the binary operation $\circ$ on $P$ is defined by
\[
\begin{array}{c|ccccccc}
\circ & 0 & a & b & c & d & e & 1 \\
\hline
  0   & 1 & 1 & 1 & 1 & 1 & 1 & 1 \\
	a   & c & 1 & 1 & c & 1 & 1 & 1 \\
	b   & c & a & 1 & c & 1 & 1 & 1 \\
	c   & b & a & b & 1 & 1 & 1 & 1 \\
	d   & 0 & a & b & c & 1 & e & 1 \\
	e   & 0 & a & b & c & d & 1 & 1 \\
	1   & 0 & a & b & c & d & e & 1
\end{array}
\]
then $(P,\leq,\circ,1)$ is a strongly sectionally pseudocomplemented poset which is neither a lattice nor relatively pseudocomplemented since the relative pseudocomplement of $b$ with respect to $a$ does not exist. Since
\[
L\big(U(a,c),b\big)=LU(b)=L(b)\neq L(a)=LUL(a)=LU\big(L(a,b),L(c,b)\big),
\]
$\mathbf P$ is not distributive.
\end{example}

An example of a sectionally pseudocomplemented poset (having a top element $1$) which is not strongly sectionally pseudocomplemented is given in \cite{CLP21}.

It is well-known that every relatively pseudocomplemented lattice is distributive. Sectional pseudocomplementation was introduced by the first author (\cite C) in order to extend the concept of relative pseudocomplementation to non-distributive lattices. It was shown that sectionally pseudocomplemented lattices form a variety and, considered as posets, they are strongly sectionally pseudocomplemented. It is a natural question if also these posets can be considered as a formalization of certain unsharp propositional logic. We can consider the binary operation $\circ$ of sectional pseudocomplementation as the logical connective of implication. Having two propositions with truth values $a$ and $b$ such that $b\leq a$, the formula
\[
L(U(a,b),c)=L(b)
\]
can be rewritten as
\[
a\wedge c=b.
\]
Hence, by Definition~\ref{def2}, for $b\leq a$,
\[
a\wedge c=b\text{ if and only if }c\leq a\circ b.
\]
Therefore, for $b\leq a$ we have a form of {\em adjointness} of operations $\wedge$ and $\circ$. Moreover, we have
\[
a\wedge(a\circ b)=b.
\]
Thus the truth value of $b$ is the same as truth value of conjunction $a\wedge(a\circ b)$ provided $b\leq a$. This is again a version of {\em Modus Ponens} restricted to elements $a,b$ with $b\leq a$.

Also directed sectionally pseudocomplemented posets can be characterized by assigned $\lambda$-lattices with a binary operation as follows.

\begin{theorem}\label{th2}
Let $(P,\leq)$ be a directed poset, $\circ$ a binary operation on $P$ and $(P,\sqcup,\sqcap)$ a $\lambda$-lattice corresponding to $(P,\leq)$. Then the following hold:
\begin{enumerate}[{\rm(a)}]
\item $\mathbf P=(P,\leq,\circ)$ is a sectionally pseudocomplemented poset if and only if $\mathbf A=(P,\sqcup,\sqcap,\circ)$ satisfies the following conditions:
\begin{enumerate}[{\rm(i)}]
\item $y\sqcap(x\circ y)\approx y$,
\item $\Big(\big((x\sqcup t)\sqcup(y\sqcup t)\big)\sqcap z\Big)\sqcap\big((x\circ y)\sqcap z\big)=z\,\forall t\in P\Rightarrow z\sqcap y=z$,
\item $\bigg(\Big(\big((x\sqcup t)\sqcup(y\sqcup t)\big)\sqcap s\Big)\sqcap(z\sqcap s)=s\,\forall t\in P\Leftrightarrow s\sqcap y=s\bigg)\,\forall s\in P\Rightarrow z\sqcap(x\circ y)=z$.
\end{enumerate}
\item If $1\in P$ then $\mathbf P=(P,\leq,\circ,1)$ is a sectionally pseudocomplemented poset with $1$ if and only if $\mathbf A=(P,\sqcup,\sqcap,\circ,1)$ satisfies {\rm(i)} -- {\rm(iii)} as well as
\begin{enumerate}
\item[{\rm(iv)}] $x\sqcap1\approx x$.
\end{enumerate}
\end{enumerate}
In both cases we call $\mathbf A$ an {\em algebra assigned} to $\mathbf P$.
\end{theorem}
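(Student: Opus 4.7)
The plan is to follow the template of Theorems \ref{th1} and \ref{th3}: first reformulate Definition \ref{def2} as three poset-theoretic conditions, and then translate each of them into the language of the assigned $\lambda$-lattice using Lemma \ref{lem1}. Unfolding the definition, $\mathbf P=(P,\leq,\circ)$ is sectionally pseudocomplemented if and only if for all $x,y\in P$ the following hold: (i') $y\leq x\circ y$; (ii') $L(U(x,y),x\circ y)\subseteq L(y)$; and (iii') for every $z\in P$, $L(U(x,y),z)=L(y)$ implies $z\leq x\circ y$. The reverse inclusion $L(y)\subseteq L(U(x,y),x\circ y)$ is automatic from (i') together with $y\leq U(x,y)$, so (i') and (ii') together already give $L(U(x,y),x\circ y)=L(y)$, while (iii') expresses maximality of $x\circ y$ among all $z$ with that property.

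Translating (i') is immediate: $y\leq x\circ y$ is exactly $y\sqcap(x\circ y)=y$, which is (i). For (ii'), I would use Lemma \ref{lem1}(c) to write $U(x,y)=\{(x\sqcup t)\sqcup(y\sqcup t)\mid t\in P\}$; then $z\in L(U(x,y),x\circ y)$ is equivalent to "$z\in L((x\sqcup t)\sqcup(y\sqcup t),x\circ y)$ for every $t\in P$", which by Lemma \ref{lem1}(b) is precisely the quantified left-hand side of (ii). Since $z\leq y$ iff $z\sqcap y=z$, this yields (ii). For (iii'), the equality $L(U(x,y),z)=L(y)$ unfolds pointwise as "for every $s\in P$, $s\in L(U(x,y),z)$ iff $s\in L(y)$"; applying the same two translations (now with $s$ in the role of the lower-cone witness of Lemma \ref{lem1}(b)) gives exactly the biconditional appearing as the hypothesis of (iii), and the conclusion $z\sqcap(x\circ y)=z$ encodes $z\leq x\circ y$. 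All of these equivalences are reversible, establishing (a). For (b), Definition \ref{def2} adds only the requirement that $1$ be the top element, i.e.\ $x\leq 1$ for all $x\in P$, which is precisely (iv) $x\sqcap 1\approx x$.

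The main obstacle is notational rather than mathematical: condition (iii) packages a \emph{universal} biconditional whose inner "$\forall t\in P$" stems from the representation of $U(x,y)$ and therefore scopes over only one side of the iff, while the outer "$\forall s\in P$" governs the whole biconditional because it encodes the equality of lower cones pointwise. Once the quantifier scopes are parsed correctly and one matches each nested $\sqcap$-expression to the clause of Lemma \ref{lem1}(b) or (c) that produced it, both directions of each equivalence are routine.
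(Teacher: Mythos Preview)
Your proposal is correct and follows essentially the same route as the paper: reformulate Definition~\ref{def2} as the three poset conditions (i$'$)--(iii$'$), then translate each via Lemma~\ref{lem1}. The only presentational difference is that the paper first records the raw unfolding as the pair $(\alpha)$ $L(U(x,y),x\circ y)=L(y)$ together with (iii$'$), and then explicitly checks that $(\alpha)\Leftrightarrow$ (i$'$)${}+{}$(ii$'$) (in particular deriving (i$'$) from $(\alpha)$ via $y\in L(y)$); you collapse this into the phrase ``unfolding the definition'', which is fine but slightly less explicit in the direction $(\alpha)\Rightarrow$ (i$'$).
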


\begin{proof}
\
\begin{enumerate}[(a)]
\item $\mathbf P$ is a sectionally pseudocomplemented poset if and only if the following hold:
\begin{enumerate}
\item[($\alpha$)] $L\big(U(x,y),x\circ y\big)=L(y)$,
\item[(iii')] $L\big(U(x,y),z\big)=L(y)\Rightarrow z\leq x\circ y$.
\end{enumerate}
Now $\mathbf P$ satisfies ($\alpha$) and (iii') if and only if it satisfies (i') -- (iii') where (i') and (ii') denote the following conditions:
\begin{enumerate}[(i')]
\item $y\leq x\circ y$,
\item $L\big(U(x,y),x\circ y\big)\leq y$.
\end{enumerate}
This can be seen as follows: If $\mathbf P$ satisfies ($\alpha$) and (iii') then it satisfies (ii') and because of $L\big(U(x,y),y\big)=L(y)$ and (iii') it satisfies (i'). If, conversely, $\mathbf P$ satisfies (i') -- (iii') then because of
\[
L(y)=L\big(U(x,y),y\big)\subseteq L\big(U(x,y),x\circ y\big)\subseteq L(y)
\]
it satisfies ($\alpha$). This shows that $\mathbf P$ is a sectionally pseudocomplemented poset if and only it satisfies (i') -- (iii'). Now $\mathbf P$ satisfies (i') if and only if $\mathbf A$ satisfies (i). Because of Lemma~\ref{lem1} (c) and (b), $\mathbf P$ satisfies (ii') if and only if $\mathbf A$ satisfies (ii), and $\mathbf P$ satisfies (iii') if and only if $\mathbf A$ satisfies (iii).
\item $\mathbf P$ is a sectionally pseudocomplemented poset with $1$ if and only if $(P,\leq,\circ)$ is a sectionally pseudocomplemented poset satisfying
\begin{enumerate}
\item[(iv')] $x\leq1$.
\end{enumerate}
Now $\mathbf P$ satisfies (iv') if and only if $\mathbf A$ satisfies (iv). The rest follows from (a).
\end{enumerate}
\end{proof}

In the next theorem we show how some properties of algebras assigned to sectionally pseudocomplemented posets with $1$ can be derived from conditions (i) -- (iv) of Theorem~\ref{th2}. As we remarked above, in a $\lambda$-lattice $(A,\sqcup,\sqcap)$ the identities $x\sqcap1\approx x$ and $x\sqcup1\approx1$ are equivalent.

\begin{theorem}\label{th5}
Let $\mathbf A=(P,\sqcup,\sqcap,\circ,1)$ be an algebra assigned to a sectionally pseudocomplemented poset $(P,\leq,\circ,1)$ with $1$. Then it satisfies the following identities:
\begin{enumerate}[{\rm(a)}]
\item $x\circ x\approx1$,
\item $1\circ x\approx x$.
\end{enumerate}
\end{theorem}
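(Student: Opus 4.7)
The plan is to derive both identities algebraically from conditions (i)--(iv) of Theorem~\ref{th2}, in the same spirit as the proof of Theorem~\ref{th4}. Throughout I will freely use idempotence and commutativity of $\sqcup$ and $\sqcap$ in a $\lambda$-lattice, the absorption laws, and the identity $x \sqcup 1 \approx 1$ which, as remarked in the paper just before the theorem, is equivalent to (iv) in a $\lambda$-lattice.

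For (a), I apply condition (iii) with $y$ replaced by $x$ and $z$ replaced by $1$, so that the conclusion becomes $1 \sqcap (x \circ x) = 1$, which by (iv) forces $x \circ x = 1$. The premise to verify is that for every $s \in P$, the equivalence
\[
\Big(\big((x \sqcup t) \sqcup (x \sqcup t)\big) \sqcap s\Big) \sqcap (1 \sqcap s) = s \text{ for all } t \in P \quad\Longleftrightarrow\quad s \sqcap x = s
\]
holds. Using idempotence of $\sqcup$, condition (iv), and the fact that $u \sqcap s = u$ whenever $u \leq s$ (applied to $u := (x\sqcup t)\sqcap s$), the left-hand side collapses to the requirement $(x \sqcup t) \sqcap s = s$ for every $t$, i.e.\ $s \leq x \sqcup t$ for every $t$. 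Specialising $t := x$ (and using $x \sqcup x = x$) recovers $s \leq x$, and the converse direction is immediate. Hence (iii) applies and yields (a).

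For (b), condition (i) with $x$ and $y$ replaced by $1$ and $x$ immediately yields $x \sqcap (1 \circ x) = x$, i.e.\ $x \leq 1 \circ x$. For the reverse inequality I apply condition (ii) with $x, y, z$ replaced by $1, x, 1 \circ x$. Using $1 \sqcup t \approx 1$, every occurrence of $(1 \sqcup t) \sqcup (x \sqcup t)$ in the premise reduces to $1$, and after invoking (iv) and idempotence of $\sqcap$ the entire premise collapses to the trivially true equality $(1 \circ x) \sqcap (1 \circ x) = 1 \circ x$. Condition (ii) then gives $(1 \circ x) \sqcap x = 1 \circ x$, i.e.\ $1 \circ x \leq x$, and combining with the previous inequality yields (b).

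The main obstacle is the careful bookkeeping required for the deeply nested expressions appearing in (ii) and (iii); in particular, one must correctly track the universal quantifier on $t$ in (iii) and exploit the simplifying consequences of (iv), namely $x \sqcap 1 \approx x$ and $x \sqcup 1 \approx 1$, to reduce the compound terms to manageable form. Once these simplifications are performed, both identities fall out in one application each of (ii) and (iii).
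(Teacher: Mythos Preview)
Your proof is correct and follows essentially the same approach as the paper's. The argument for (a) is identical to the paper's: verify the biconditional premise of (iii) with $y:=x$, $z:=1$ by reducing the left-hand side to ``$s\leq x\sqcup t$ for all $t$'' and then specialising $t:=x$.

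For (b) there is one small but genuine difference worth noting. Both you and the paper obtain $1\circ x\leq x$ from condition (ii) with the substitution $x,y,z\mapsto 1,x,1\circ x$. For the other inequality $x\leq 1\circ x$, however, the paper derives it by verifying the (biconditional) premise of condition (iii) with $x,y,z\mapsto 1,x,x$, whereas you simply invoke condition (i) with $x,y\mapsto 1,x$, which gives $x\sqcap(1\circ x)=x$ in one step. Your route is shorter and avoids a second appeal to the implicational condition (iii); the paper's route is self-contained within (ii)--(iv) but does more work than necessary.
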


\begin{proof}
\
\begin{enumerate}[(a)]
\item According to (iv) we have
\[
\Big(\big((x\sqcup t)\sqcup(x\sqcup t)\big)\sqcap s\Big)\sqcap(1\sqcap s)=\big((x\sqcup t)\sqcap s\big)\sqcap s=(x\sqcup t)\sqcap s
\]
and hence the following are equivalent:
\begin{align*}
\Big(\big((x\sqcup t)\sqcup(x\sqcup t)\big)\sqcap s\Big)\sqcap(1\sqcap s) & =s\forall t\in P, \\
                                                      (x\sqcup t)\sqcap s & =s\forall t\in P, \\
                                                                        s & \leq x\sqcup t\forall t\in P, \\
                                                                        s & \leq x, \\
                                                                s\sqcap x & =s
\end{align*}
which implies
\[
x\circ x=1\sqcap(x\circ x)=1
\]
by (iv) and by (iii) when replacing $y$ and $z$ by $x$ and $1$, respectively.
\item According to (iv) we have
\begin{align*}
& \Big(\big((1\sqcup t)\sqcup(x\sqcup t)\big)\sqcap(1\circ x)\Big)\sqcap\big((1\circ x)\sqcap(1\circ x)\big)= \\
& =\Big(\big(1\sqcup(x\sqcup t)\big)\sqcap(1\circ x)\Big)\sqcap(1\circ x)=\big(1\sqcap(1\circ x)\big)\sqcap(1\circ x)=(1\circ x)\sqcap(1\circ x)= \\
& =1\circ x\forall t\in P
\end{align*}
and hence
\begin{equation}\label{equ1}
(1\circ x)\sqcap x=1\circ x
\end{equation}
by (ii) when replacing $x$, $y$ and $z$ by $1$, $x$ and $1\circ x$, respectively. Moreover, according to (iv) we have
\begin{align*}
\Big(\big((1\sqcup t)\sqcup(x\sqcup t)\big)\sqcap s\Big)\sqcap(x\sqcap s) & =\Big(\big(1\sqcup(x\sqcup t)\big)\sqcap s\Big)\sqcap(x\sqcap s)= \\
                                                                          & =(1\sqcap s)\sqcap(x\sqcap s)=s\sqcap(x\sqcap s)=x\sqcap s=s\sqcap x
\end{align*}
and hence the following are equivalent:
\begin{align*}
\Big(\big((1\sqcup t)\sqcup(x\sqcup t)\big)\sqcap s\Big)\sqcap(x\sqcap s) & =s\forall t\in P, \\
                                                                s\sqcap x & =s.
\end{align*}
This implies
\begin{equation}\label{equ2}
x\sqcap(1\circ x)=x
\end{equation}
by (iii) when replacing $x$, $y$ and $z$ by $1$, $x$ and $x$, respectively. From (\ref{equ1}) and (\ref{equ2}) we obtain (b).
\end{enumerate}
\end{proof}
 
Analogously to the characterizations in Theorem~\ref{th2} we can characterize also strongly sectionally pseudocomplemented posets.

\begin{theorem}
Let $(P,\leq)$ be a directed poset, $\circ$ a binary operation on $P$, $1\in P$ and $(P,\sqcup,\sqcap)$ a $\lambda$-lattice corresponding to $(P,\leq)$. Then $\mathbf P=(P,\leq,\circ,1)$ is a strongly sectionally pseudocomplemented poset if and only if $\mathbf A=(P,\sqcup,\sqcap,\circ,1)$ satisfies {\rm(i)} -- {\rm(iv)} of Theorem~\ref{th2} as well as
\begin{enumerate}
\item[{\rm(v)}] $x\sqcap\big((x\circ y)\circ y\big)\approx x$.
\end{enumerate}
In this case we call $\mathbf A$ an {\em algebra assigned} to $\mathbf P$.
\end{theorem}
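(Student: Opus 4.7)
The plan is to reduce this theorem to Theorem~\ref{th2}(b) together with a single direct translation of the extra axiom defining a strongly sectionally pseudocomplemented poset.

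First I would invoke Theorem~\ref{th2}(b) to conclude that $\mathbf P=(P,\leq,\circ,1)$ is a sectionally pseudocomplemented poset with $1$ if and only if the algebra $\mathbf A=(P,\sqcup,\sqcap,\circ,1)$ satisfies (i) -- (iv). Thus, by Definition~\ref{def2}, the only thing remaining to translate is the strongness condition
\[
x\leq(x\circ y)\circ y\quad\text{for all }x,y\in P.
\]

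Second, I would translate this order condition into the language of the assigned $\lambda$-lattice. Recall that in a $\lambda$-lattice the induced order is characterized by $u\leq v$ if and only if $u\sqcap v=u$. Applying this with $u=x$ and $v=(x\circ y)\circ y$, the strongness condition holds for all $x,y\in P$ if and only if the identity
\[
x\sqcap\big((x\circ y)\circ y\big)\approx x
\]
holds in $\mathbf A$, which is precisely condition (v).

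Combining these two observations: $\mathbf P$ is a strongly sectionally pseudocomplemented poset if and only if it is a sectionally pseudocomplemented poset with $1$ (equivalently, $\mathbf A$ satisfies (i) -- (iv)) and $x\sqcap((x\circ y)\circ y)=x$ for all $x,y\in P$ (equivalently, $\mathbf A$ satisfies (v)). There is no real obstacle here; the argument is a one-line bookkeeping step on top of Theorem~\ref{th2}(b), using only that the order of the induced poset of a $\lambda$-lattice is encoded by $\sqcap$.
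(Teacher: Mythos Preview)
Your proposal is correct and mirrors the paper's own argument essentially line for line: the paper also reduces to Theorem~\ref{th2} for conditions (i)--(iv) and then observes that the strongness condition $x\leq(x\circ y)\circ y$ is equivalent to the identity (v) via the characterization of $\leq$ in a $\lambda$-lattice. There is nothing to add.
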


\begin{proof}
$\mathbf P$ is a strongly sectionally pseudocomplemented poset if and only if $(P,\leq,\circ)$ is a sectionally pseudocomplemented poset with $1$ satisfying
\begin{enumerate}
\item[(v')] $x\leq(x\circ y)\circ y$.
\end{enumerate}
Now $\mathbf P$ satisfies (v') if and only if $\mathbf A$ satisfies (v). The rest follows from Theorem~\ref{th2}.
\end{proof}

\section{Congruence properties}

Next we consider congruence properties of algebras. For the convenience of the reader we recall the corresponding definitions.

An algebra $\mathbf A$ is called
\begin{itemize}
\item {\em congruence permutable} if $\Theta\circ\Phi=\Phi\circ\Theta$ for all $\Theta,\Phi\in\Con\mathbf A$,
\item {\em congruence distributive} if the congruence lattice of $\mathbf A$ is distributive,
\item {\em arithmetical} if it is both congruence permutable and congruence distributive,
\item {\em weakly regular} (with respect to an equationally definable constant $1$) if $\Theta,\Phi\in\Con\mathbf A$ and $[1]\Theta=[1]\Phi$ imply $\Theta=\Phi$.
\end{itemize}

The following results are well-known (see e.g.\ \cite{CEL}) :

Let $\mathbf A$ be an algebra.
\begin{itemize}
\item If there exists a ternary term $p$ satisfying the identities $p(x,x,y)\approx p(y,x,x)\approx y$ then $\mathbf A$ is congruence permutable.
\item If there exists a ternary term $m$ satisfying the identities $m(x,x,y)\approx m(x,y,x)\approx m(y,x,x)\approx x$ then $\mathbf A$ is congruence distributive.
\item If $\mathbf A$ has an equationally definable constant $1$ and there exists a positive integer $n$, binary terms $t_1,\ldots,t_n$ and quaternary terms $s_1,\ldots,s_n$ satisfying the identities
\begin{align*}
& t_1(x,x)\approx\cdots\approx t_n(x,x)\approx1, \\
& s_1\big(t_1(x,y),1,x,y\big)\approx x, \\
& s_i\big(1,t_i(x,y),x,y\big)\approx s_{i+1}\big(t_{i+1}(x,y),1,x,y\big)\text{ for }i=1,\ldots,n-1, \\
& s_n\big(1,t_n(x,y),x,y\big)\approx y
\end{align*}
then $\mathbf A$ is weakly regular.
\end{itemize}
The terms $p$ and $m$ are called {\em Maltsev term} and {\em majority term}, respectively.

Now we can prove

\begin{theorem}
\
\begin{enumerate}[{\rm(i)}]
\item Let $(P,\leq,{}^*,0)$ be a Stone poset and $\mathbf A=(P,\sqcup,\sqcap,{}^*,0)$ be an assigned algebra. Then $\mathbf A$ is congruence distributive.
\item Let $(P,\leq,*,1)$ be a down-directed relatively pseudocomplemented poset and $\mathbf A=(P,\sqcap,*,1)$ an assigned algebra. Then $\mathbf A$ is congruence permutable and weakly regular.
\item Let $(P,\leq,\circ)$ be a directed sectionally pseudocomplemented poset and $\mathbf A=(P,\sqcup,\sqcap,\circ)$ an assigned algebra. Then $\mathbf A$ is congruence distributive.
\item Let $(P,\leq,\circ,1)$ be a down-directed sectionally pseudocomplemented poset with $1$ and $\mathbf A=(P,\sqcup,\sqcap,\circ,1)$ an assigned algebra. Then $\mathbf A$ is congruence distributive and weakly regular.
\item Let $(P,\leq,\circ,1)$ be a down-directed strongly sectionally pseudocomplemented poset and $\mathbf A=(P,\sqcup,\sqcap,\circ,1)$ an assigned algebra. Then $\mathbf A$ is arithmetical and weakly regular.
\end{enumerate}
\end{theorem}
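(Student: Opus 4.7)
All five parts follow the same template: verify the sufficient term-based conditions recalled just before the theorem — a Maltsev term for permutability, a majority term for distributivity, and a direct argument for weak regularity — by working in the operations of the respective assigned algebra and invoking Theorems~\ref{th1}--\ref{th5} (and the strongly-sectional identity of the preceding theorem where relevant). The congruence distributivity claimed in (i), (iii), (iv) and (v) is already a property of the underlying $\lambda$-lattice alone: the standard ternary term
\[
m(x,y,z):=\bigl((x\sqcap y)\sqcup(y\sqcap z)\bigr)\sqcup(x\sqcap z)
\]
is a majority term in every $\lambda$-lattice, since the three identities $m(x,x,y)\approx m(x,y,x)\approx m(y,x,x)\approx x$ each reduce by repeated use of idempotence, commutativity and the two absorption laws $a\sqcup(a\sqcap b)\approx a$, $(a\sqcap b)\sqcup a\approx a$, with no appeal to $^*$, $*$ or $\circ$ whatsoever.

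For the Maltsev terms needed in (ii) and (v) I would take
\[
p(x,y,z):=\bigl((x*y)*z\bigr)\sqcap\bigl((z*y)*x\bigr),
\]
replacing $*$ by $\circ$ in case~(v). At $(x,x,y)$ this collapses, via $x*x\approx 1$ and $1*y\approx y$ (Theorem~\ref{th4}(a),(b) or Theorem~\ref{th5}(a),(b)), to $y\sqcap\bigl((y*x)*x\bigr)$, which equals $y$ because $y\leq(y*x)*x$ is Theorem~\ref{th4}(c) rewritten in case~(ii), and the strongly-sectional identity $x\sqcap\bigl((x\circ y)\circ y\bigr)\approx x$ of the preceding theorem in case~(v). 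The identity $p(y,x,x)\approx y$ then follows by the symmetry of $p$ in its outer arguments.

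For weak regularity in (ii), (iv) and (v) I would argue directly rather than look for an explicit Ursini scheme, which is awkward here since $\sqcap$ is merely a choice function on lower cones (the naive attempt reduces to $(x*y)\sqcap x\approx(y*x)\sqcap y$, which need not hold as an identity). The goal is to show, for each $\Theta\in\Con\mathbf A$, the equivalence $(x,y)\in\Theta\Leftrightarrow x*y,y*x\in[1]\Theta$ (with $\circ$ replacing $*$ in (iv), (v)); the forward direction is immediate from compatibility, and the backward direction then yields $\Theta=\Phi$ whenever $[1]\Theta=[1]\Phi$. In case~(ii), the converse follows by combining $(x*y)*y\equiv_\Theta 1*y=y$ with Theorem~\ref{th4}(c), $x=x\sqcap\bigl((x*y)*y\bigr)$, to obtain $x\equiv_\Theta x\sqcap y$ and, symmetrically, $y\equiv_\Theta x\sqcap y$; the same argument with the strongly-sectional identity handles~(v). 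Case~(iv) is the main technical obstacle, since that identity is unavailable: one sets $z:=x\sqcap(x\circ y)$ and $w:=y\sqcap(y\circ x)$, uses the defining property $L(U(x,y),x\circ y)=L(y)$ to conclude $z,w\in L(x,y)$ with $z\equiv_\Theta x$ and $w\equiv_\Theta y$ (the latter because $x\circ y,y\circ x\equiv_\Theta 1$), and then exploits that $\sqcup$ (available in~(iv) because $1$ forces the poset to be directed) returns the larger element on comparable pairs: applying $\sqcup y$ to $z\equiv_\Theta x$ gives $y=z\sqcup y\equiv_\Theta x\sqcup y$, and applying $\sqcup x$ to $w\equiv_\Theta y$ gives $x=w\sqcup x\equiv_\Theta x\sqcup y$, whence $x\equiv_\Theta x\sqcup y\equiv_\Theta y$. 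Verifying this chain — in particular the step in which $\sqcup$ bridges the two elements of the common lower cone — is where the most care is required and is the genuine obstacle of the proof.
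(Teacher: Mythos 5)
Your proposal is correct in all five parts, and it splits into a part that coincides with the paper and a part that takes a genuinely different (and in one place more careful) route. For congruence distributivity you use the majority term $m(x,y,z)=((x\sqcap y)\sqcup(y\sqcap z))\sqcup(x\sqcap z)$, the order-dual of the paper's $((x\sqcup y)\sqcap(y\sqcup z))\sqcap(z\sqcup x)$; both verifications use only idempotence, commutativity and the two absorption laws of a $\lambda$-lattice, so this is essentially the same argument. Your Maltsev term $p(x,y,z)=((x*y)*z)\sqcap((z*y)*x)$ is literally the paper's term, verified the same way via $x*x\approx1$, $1*y\approx y$ and $y\sqcap((y*x)*x)\approx y$. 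The genuine divergence is weak regularity. The paper exhibits an explicit two-step scheme $t_1(x,y)=x*y$, $t_2(x,y)=y*x$, $s_1(a,b,z,u)=(a*u)\sqcap z$, $s_2(a,b,z,u)=(b*z)\sqcap u$, whose key identities $((x*y)*y)\sqcap x\approx x$ and $((y*x)*x)\sqcap y\approx y$ are exactly Theorem~\ref{th4}(c); so your remark that a term scheme is ``awkward'' here is not accurate for cases (ii) and (v), where it works cleanly. You instead prove directly that $(x,y)\in\Theta$ iff $x*y,\,y*x\in[1]\Theta$, which is equally valid and, importantly, pays off in case (iv): there the needed identity $x\sqcap((x\circ y)\circ y)\approx x$ is precisely the extra axiom distinguishing \emph{strongly} sectionally pseudocomplemented posets and is \emph{not} available, so the paper's laconic ``as in (ii) using Theorem~\ref{th5}'' does not transfer verbatim. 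Your order-theoretic substitute --- set $z=x\sqcap(x\circ y)$ and $w=y\sqcap(y\circ x)$, use $L(U(x,y),x\circ y)=L(y)$ to get $z,w\in L(x,y)$ with $z\equiv_\Theta x$ and $w\equiv_\Theta y$, then apply $\sqcup\,y$ and $\sqcup\,x$ --- is sound: the equalities $z\sqcup y=y$ and $w\sqcup x=x$ hold because an assigned $\lambda$-lattice returns the maximum on comparable pairs, and $\sqcup$ is a fundamental operation of the algebra in (iv), so $\Theta$ is compatible with it. In short, your proof is complete, and in case (iv) it actually supplies a detail the paper's own proof glosses over.
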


\begin{proof}
\
\begin{enumerate}[(i)]
\item If
\[
m(x,y,z):=\big((x\sqcup y)\sqcap(y\sqcup z)\big)\sqcap(z\sqcup x)
\]
then
\begin{align*}
m(x,x,y) & \approx\big((x\sqcup x)\sqcap(x\sqcup y)\big)\sqcap(y\sqcup x)\approx\big(x\sqcap(x\sqcup y)\big)\sqcap(y\sqcup x)\approx x\sqcap(y\sqcup x)\approx \\
         & \approx x, \\
m(x,y,x) & \approx\big((x\sqcup y)\sqcap(y\sqcup x)\big)\sqcap(x\sqcup x)\approx(x\sqcup y)\sqcap x\approx x, \\
m(y,x,x) & \approx\big((y\sqcup x)\sqcap(x\sqcup x)\big)\sqcap(x\sqcup y)\approx\big((y\sqcup x)\sqcap x\big)\sqcap(x\sqcup y)\approx x\sqcap(x\sqcup y)\approx \\
         & \approx x.
\end{align*}
\item If
\begin{align*}
    p(x,y,z) & :=\big((x*y)*z\big)\sqcap\big((z*y)*x\big), \\
    t_1(x,y) & :=x*y, \\
    t_2(x,y) & :=y*x, \\
s_1(x,y,z,u) & :=(x*u)\sqcap z, \\
s_2(x,y,z,u) & :=(y*z)\sqcap u
\end{align*}
then by Theorem~\ref{th4} we obtain
\begin{align*}
                   p(x,x,y) & \approx\big((x*x)*y\big)\sqcap\big((y*x)*x\big)\approx(1*y)\sqcap\big((y*x)*x\big)\approx \\
					                  & \approx y\sqcap\big((y*x)*x\big)\approx y, \\
                   p(y,x,x) & \approx\big((y*x)*x\big)\sqcap\big((x*x)*y\big)\approx\big((y*x)*x\big)\sqcap(1*y)\approx \\
					                  & \approx\big((y*x)*x\big)\sqcap y\approx y, \\ 
                   t_1(x,x) & \approx x*x\approx1, \\
                   t_2(x,x) & \approx x*x\approx1, \\
s_1\big(t_1(x,y),1,x,y\big) & \approx\big((x*y)*y\big)\sqcap x\approx x, \\
s_1\big(1,t_1(x,y),x,y\big) & \approx(1*y)\sqcap x\approx y\sqcap x\approx x\sqcap y\approx(1*x)\sqcap y\approx s_2\big(t_2(x,y),1,x,y\big), \\
s_2\big(1,t_2(x,y),x,y\big) & \approx\big((y*x)*x\big)\sqcap y\approx y.
\end{align*}
\item This follows like in (i).
\item This follows like in (i) and (ii) using Theorem~\ref{th5} instead of Theorem~\ref{th4}.
\item This follows like in (i) and (ii) using Theorem~\ref{th5} instead of Theorem~\ref{th4}.
\end{enumerate}
\end{proof}

\section{Conclusions}

The advantage of our approach of introducing algebras $\mathbf A$ closely related to given posets $\mathbf P$ is that we can introduce congruences in $\mathbf P$ by means of $\mathbf A$. Namely, we may introduce congruences on $\mathbf P$ as congruences on a fixed algebra $\mathbf A$ assigned to $\mathbf P$. Of course, the assignment of $\mathbf A$ to $\mathbf P$ is in general not unique as mentioned above and hence for different algebras assigned to $\mathbf P$ we could obtain different congruences on $\mathbf P$. However, regardless which algebra $\mathbf A$ is assigned to $\mathbf P$, the lattice of congruences on $\mathbf P$ will be distributive if $\mathbf A$ is a $\lambda$-lattice and congruences on $\mathbf P$ will permute and will be fully determined by their $1$-class if $\mathbf P$ is down-directed and either relatively pseudocomplemented or strongly sectionally pseudocomplemented. This may shed new light on the concept of congruences on posets.

Another application of our approach may be the following. Given a directed relatively or sectionally pseudocomplemented poset $\mathbf P$, we can ask if there are posets $\mathbf P_1$ and $\mathbf P_2$ of the same kind as $\mathbf P$ such that $\mathbf P_1\times\mathbf P_2\cong\mathbf P$. For posets, it is not easy to decide if such a decomposition is possible. However, we can assign to $\mathbf P$ an algebra $\mathbf A$ as constructed above. Now we have a simple criterion for deciding if there exist algebras $\mathbf A_1$ and $\mathbf A_2$ with $\mathbf A_1\times\mathbf A_2\cong\mathbf A$. Namely, such a decomposition is possible if and only if there exist $\Theta,\Phi\in\Con\mathbf A$ such that
\begin{align*}
 \Theta\vee\Phi & =\nabla_A, \\
 \Theta\cap\Phi & =\Delta_A, \\
\Theta\circ\Phi & =\Phi\circ\Theta.
\end{align*}
Hence, if such congruences exist, we have $\mathbf A\cong\mathbf A_1\times\mathbf A_2$ with $\mathbf A_1=\mathbf A/\Theta$ and $\mathbf A_2=\mathbf A/\Phi$. Now we can conversely assign posets $\mathbf P_i$ to $\mathbf A_i$ for $i=1,2$ and we can write $\mathbf P_1\times\mathbf P_2\cong\mathbf P$.

Moreover, if $\mathbf P=\mathbf P_1\times\mathbf P_2$ then we can ask if a given congruence $\Theta$ on $\mathbf P$ is directly decomposable, i.e.\ if there are $\Theta_i\in\Con\mathbf P_i$ for $i=1,2$ such that $\Theta_1\times\Theta_2=\Theta$. If for assigned algebras we have $\mathbf A=\mathbf A_1\times\mathbf A_2$ then, provided $\mathbf A$ is congruence distributive, there exist $\Theta_i\in\Con\mathbf A_i$ for $i=1,2$ such that $\Theta_1\times\Theta_2=\Theta$ and hence $\mathbf P$ has directly decomposable congruences.

Authors' addresses:

Ivan Chajda \\
Palack\'y University Olomouc \\
Faculty of Science \\
Department of Algebra and Geometry \\
17.\ listopadu 12 \\
771 46 Olomouc \\
Czech Republic \\
ivan.chajda@upol.cz

Helmut L\"anger \\
TU Wien \\
Faculty of Mathematics and Geoinformation \\
Institute of Discrete Mathematics and Geometry \\
Wiedner Hauptstra\ss e 8-10 \\
1040 Vienna \\
Austria, and \\
Palack\'y University Olomouc \\
Faculty of Science \\
Department of Algebra and Geometry \\
17.\ listopadu 12 \\
771 46 Olomouc \\
Czech Republic \\
helmut.laenger@tuwien.ac.at

\end{document}